\def\R{\mathbb{R}}
\def\Z{\mathbb{Z}}
\newtheorem{theorem}{Theorem}[section]
\newtheorem{lemma}[theorem]{Lemma}
\newtheorem{corollary}[theorem]{Corollary}
\theoremstyle{definition}
\newtheorem{remark}[theorem]{Remark}
\title[CONTACT STRUCTURES ON $G_2$-MANIFOLDS AND SPIN 7-MANIFOLDS]
{CONTACT STRUCTURES ON $G_2-$MANIFOLDS \\ AND SPIN 7-MANIFOLDS}
\author{M. Firat Arikan}
\address{Department of Mathematics, University of Rochester, Rochester NY, USA
\indent Max Planck Institute for Mathematics, Bonn, GERMANY}
\email{arikan@math.rochester.edu, arikan@mpim-bonn.mpg.de}
\thanks{The first named author is partially supported by NSF FRG grant DMS-1065910}
\author{Hyunjoo Cho}
\address{Department of Mathematics, University of Rochester, Rochester NY, USA}
\email{cho@math.rochester.edu}
\author{Sema Salur}
\address{Department of Mathematics, University of Rochester, Rochester NY, USA}
\email{salur@math.rochester.edu}
\thanks{The third named author is partially supported by NSF grant DMS-1105663}
\subjclass[2000]{53C38,53D10,53D15,57R17}
\keywords{Contact structure, associative submanifold, $G_2$ manifold}
\begin{document}
\begin{abstract}
We show that there exist infinitely many pairwise distinct non-closed $G_2$-manifolds (some of which have holonomy full $G_2$) such that they admit co-oriented contact structures and have co-oriented contact submanifolds which are also associative. Along the way, we prove that there exists a tubular neighborhood $N$ of every orientable three-submanifold $Y$ of an orientable seven-manifold with spin structure such that for every co-oriented contact structure on $Y$, $N$ admits a co-oriented contact structure such that $Y$ is a contact submanifold of $N$. Moreover, we construct infinitely many pairwise distinct non-closed seven-manifolds with spin structures which admit co-oriented contact structures and retract onto co-oriented contact submanifolds of co-dimension four.
\end{abstract}

\maketitle

%
%

\section{Introduction}

Given a smooth manifold $M$ of dimension seven, one can ask the existence of two different structures, namely, a $G_2$ structure and a contact structure: We say that $M$ admits a {\it $G_2$-structure} if the structure group of $TM$ can be reduced to $G_2$, which is the exceptional Lie group of all linear automorphisms of the imaginary octonions $im \mathbb{O}\cong \mathbb{R}^7$ preserving a certain cross product. On the other hand, a \emph{(co-oriented) contact structure} on $M$ is a co-oriented $6$-plane distribution which is totally non-integrable. The induced geometries on $M$ have quite different properties. For instance, $G_2$-structure determines a unique metric on $M$, and so there are local invariants in $G_2$ geometry. Whereas in contact geometry there are no local invariants because any point has a standard neighborhood by Darboux's theorem. It is known that there are $7$-manifolds admitting both $G_2$ and a contact structures, and that these structures are compatible in a certain way \cite{Arikan-Cho-Salur}. Here we present new examples of such $7$-manifolds with the absence of compatibility.

\vspace{.1in}

We can describe $G_2$ geometry more precisely in the following way: Identify the group $G_2$ as the subgroup of $GL(7,\mathbb{R})$ which preserves the $3$-form $$\varphi_0=e^{123}+e^{145}+e^{167}+e^{246}-e^{257}-e^{347}-e^{356}$$
where $(x_1,...,x_7)$ are the coordinates on $\mathbb{R}^7$ and $e^{ijk}=dx^i \wedge dx^j \wedge dx^k$. As an equivalent definition, a manifold with a $G_2$-structure $\varphi$ is a pair $(M,\varphi)$,
where $\varphi$ is a $3$-form on $M$, such that $(T_pM,\varphi)$ is isomorphic to $(\mathbb{R}^7,\varphi_0)$ at every point $p$ in $M$. Such a $\varphi$ defines a Riemannian metric $g$ on $M$. We say $\varphi$ is \emph{torsion-free} if $\nabla \varphi=0$ where
$\nabla$ is the Levi-Civita connection of $g$. The latter holds if and only if $d\varphi=d(\ast\varphi)=0$ where $``\ast"$ is the Hodge star operator defined by the metric $g$. A Riemannian manifold with a torsion free $G_2$-structure is called a {\it $G_2$-manifold}. Equivalently, the pair $(M,\varphi)$ is called a $G_2$-manifold if its holonomy group (with respect to $g$) is a subgroup of $G_2$. Finally, a three dimensional submanifold $Y^3$ of a manifold $M$ with (torsion-free) $G_2$-structure $\varphi$ is said to be {\it associative} if $\varphi$ is a volume form on $Y$. More details can be found in \cite{Bryant}, \cite{Bryant2}, \cite{Harvey-Lawson}  and \cite{Joyce}.

\vspace{.1in}

Contact structures are defined in any odd dimension $2n+1$ for $n\geq1$. Here we only consider co-oriented contact structures: A \emph{contact form} on a smooth $(2n+1)$-dimensional manifold $M$
is a $1$-form $\alpha$ such that $\alpha \wedge (d\alpha)^n \neq0$ (i.e., $\alpha \wedge (d\alpha)^n$ is a volume form on $M$). The \emph{Reeb vector field} of a contact form $\alpha$ is defined to be the unique global nowhere-zero vector field $R$ on $M$ satisfying the equations
\begin{equation} \label{eqn:Defining_Reeb}
\iota_Rd\alpha=0, \quad \alpha(R)=1
\end{equation}
where $`` \iota "$ denotes the interior product. The hyperplane field (of rank $2n$) $\xi=\textrm{Ker} (\alpha)$ of a contact form $\alpha$ is called a \emph{(co-oriented) contact structure} on $M$. The pair $(M,\xi)$ (or sometimes $(M,\alpha)$) is called a \emph{contact manifold}. A submanifold $Y \subset (M,\xi)$  is said to be a \emph{contact submanifold} if $TY \cap \xi|_{Y}$ defines a contact structure on $Y$. We say that two contact manifolds $(M_1,\xi_1)$ and $(M_2,\xi_2)$ are \emph{contactomorphic} if there exists a diffeomorphism $f:M_1\longrightarrow M_2$ such that $f_\ast(\xi_1)=\xi_2$. Also a \emph{strict contactomorphism} between two contact manifolds $(M_1,\alpha_1)$ and $(M_2,\alpha_2)$ is a contactomorphism $f:M_1\longrightarrow M_2$ such that $f^*(\alpha_2)=\alpha_1$.

\vspace{.1in}

The equation (\ref{eqn:Defining_Reeb}) implies that the Reeb vector field $R$ co-orients $\xi$ and, as a result, the structure group of the tangent frame bundle can be reduced to $U(n) \times 1$. Such a reduction of the structure group is called an \emph{almost contact structure} on $M$. By a result of \cite{Arikan-Cho-Salur}, every $7$-manifold with spin structure (and hence every manifold with $G_2$-structure) admits an almost contact structure.

\vspace{.1in}

As an alternative but equivalent definition, for $R,\alpha$ and $\xi$ as above, the triple $(J,R,\alpha)$ is called an \emph{associated almost contact structure} for $\xi$ if $J$ is $d\alpha$-compatible almost complex structure on $\xi$. Furthermore, if $g$ is a metric on $M$ satisfying
$$g(JX,JY)=g(X,Y)-\alpha(X)\alpha(Y) \quad \textrm{and} \quad d\alpha(X,Y)=g(JX,Y)$$
for all $X,Y \in TM$, then it is called an \emph{associated metric} or sometimes \emph{contact metric}. We refer the reader to \cite{Blair}, \cite{Geiges} for more on contact geometry.

\vspace{.1in}

Now suppose that $(M,\varphi)$ is a manifold with $G_2$-structure with an associative submanifold $Y$. One interesting question is: Does $M$ admit a contact structure $\eta$ such that $Y$ is a contact submanifold of $(M,\eta)$ ? Another reasonable one is: Given a contact structure $\xi$ on $Y$ can we extend it to some contact structure $\eta$ on $M$ so that $(Y,\xi)$ is a contact submanifold of $(M,\eta)$ ?

\vspace{.1in}

In this paper, we will show that both questions above have positive answers by constructing such manifolds. As we will see in Section \ref{sec:Main results}, one can extend any contact structure on $Y$ to some contact structure on its tubular neighborhood in $M$, and using this fact and a result of \cite{Robles_Salur} we obtain co-oriented contact structures on non-closed $G_2$-manifolds having co-oriented contact submanifolds which are also associative. Before studying the $G_2$ case we will first have similar extension and existence results for spin $7$-manifolds and manifolds with $G_2$-structures.

\medskip \noindent {\em Acknowledgments.\/} The authors would like to
thank Selman Akbulut and Jonathan Pakianathan for helpful conversations and remarks.

%
%

\section{Preparatory results} \label{sec:Preparatory results}

To be able show the existence of contact structures on certain family of spin $7$-manifolds, we first need some contact geometry preparation.

\begin{lemma} \label{lem:Contact_Str_on_Y cross_R4}
Let $Y$ be any smooth $3$-dimensional manifold. Then for any co-oriented contact structure $\xi$ on $Y$, there exists a co-oriented contact structure $\eta$ on $Y \times \R^{4}$ such that $Y \times \{0\}$ is a contact submanifold of $(Y \times \R^{4},\eta)$.
\end{lemma}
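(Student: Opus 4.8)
The plan is to exhibit an explicit contact form on $Y\times\R^{4}$ built from a global contact form $\alpha$ with $\ker\alpha=\xi$ (such an $\alpha$ exists because $\xi$ is co-oriented) together with a Liouville-type $1$-form on the $\R^{4}$ factor. Write $\mathrm{pr}_{Y}\colon Y\times\R^{4}\to Y$ for the projection, let $(x_{1},x_{2},x_{3},x_{4})$ be the standard coordinates on $\R^{4}$, and set
\[
\beta \;:=\; \mathrm{pr}_{Y}^{\,*}\alpha \;+\; \lambda, \qquad
\lambda \;:=\; \tfrac12\big(x_{1}\,dx_{2}-x_{2}\,dx_{1}+x_{3}\,dx_{4}-x_{4}\,dx_{3}\big),
\]
so that $d\lambda=\omega_{0}:=dx_{1}\wedge dx_{2}+dx_{3}\wedge dx_{4}$ is the standard symplectic form on $\R^{4}$ (with the usual abuse, a form on either factor is written the same way on the product). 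I expect $\eta:=\ker\beta$ to be the required contact structure, and in fact to be contact on all of $Y\times\R^{4}$, not merely on a tubular neighborhood of $Y\times\{0\}$.

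The one computation to carry out is that $\beta\wedge(d\beta)^{3}$ is nowhere zero. Since $d\beta=d\alpha+\omega_{0}$, the binomial expansion of $(d\beta)^{3}$ has only a single surviving term: every term containing $(d\alpha)^{2}$ vanishes because $d\alpha$ is a $2$-form on the $3$-manifold $Y$, and $\omega_{0}^{3}$ vanishes because $\omega_{0}$ is a $2$-form on $\R^{4}$, leaving $(d\beta)^{3}=3\,d\alpha\wedge\omega_{0}^{\,2}$. Wedging with $\beta$, the $\lambda$-part contributes nothing because $\lambda$ is a $1$-form on the $\R^{4}$ factor while $\omega_{0}^{\,2}$ is already a volume form there, so
\[
\beta\wedge(d\beta)^{3} \;=\; 3\,(\alpha\wedge d\alpha)\wedge\omega_{0}^{\,2}.
\]
This is nowhere zero, since $\alpha\wedge d\alpha$ is nowhere zero on $Y$ and $\omega_{0}^{\,2}$ is nowhere zero on $\R^{4}$; hence $\beta$ is a contact form on $Y\times\R^{4}$ and $\eta=\ker\beta$ is a co-oriented contact structure.

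It remains to check that $Y\times\{0\}$ is a co-oriented contact submanifold of $(Y\times\R^{4},\eta)$. Let $\iota\colon Y\hookrightarrow Y\times\{0\}\subset Y\times\R^{4}$ be the inclusion. Then $\iota^{*}\beta=\iota^{*}\mathrm{pr}_{Y}^{\,*}\alpha+\iota^{*}\lambda=\alpha$, since $\mathrm{pr}_{Y}\circ\iota=\mathrm{id}_{Y}$ and $\iota^{*}dx_{i}=0$ for each $i$. Hence $TY\cap\eta|_{Y\times\{0\}}=\ker(\iota^{*}\beta)=\ker\alpha=\xi$, which is a co-oriented contact structure on $Y$, as needed. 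I do not expect a serious obstacle here; the only point requiring care is the degree bookkeeping in the expansion of $(d\beta)^{3}$, and it is exactly the identity $(d\alpha)^{2}=0$ (valid because $\dim Y=3$) that makes the construction succeed on all of $Y\times\R^{4}$ rather than only on a neighborhood of $Y\times\{0\}$, as the general neighborhood theorem for contact submanifolds with symplectic normal bundle would yield.
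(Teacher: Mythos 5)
Your proposal is correct and follows essentially the same route as the paper: take $\beta=\mathrm{pr}_Y^*\alpha+\lambda$ with $\lambda$ a primitive of the standard symplectic form on $\R^4$, verify $\beta\wedge(d\beta)^3=3\,\alpha\wedge d\alpha\wedge\omega_0^2\neq0$ using $(d\alpha)^2=0$ and $\omega_0^3=0$, and check that $\beta$ pulls back to $\alpha$ on $Y\times\{0\}$. The only (immaterial) difference is your choice of the rotationally symmetric Liouville form in place of the paper's $x_1\,dy_1+x_2\,dy_2$.
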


\begin{proof}
Since $\xi$ is co-oriented, there is a contact form $\alpha$ on $Y$ such that $\xi=\textrm{Ker}(\alpha)$. Also consider the $1$-from $\lambda=x_1 \wedge dy_1 + x_2 \wedge dy_2$ on $\R^4$ with the standard coordinates $(x_1,y_1,x_2,y_2)$. Let $p_1:Y \times \R^4 \to Y$ and $p_2:Y \times \R^4 \to \R^4$ denote the usual projections.
Now consider the pullback forms on $Y \times \R^{4}$ given by $\tilde{\alpha}:=p_1^{\ast}(\alpha)$, $\tilde{\lambda}:=p_2^{\ast}(\lambda)$ and set
$$\beta=\tilde{\alpha}+\tilde{\lambda}.$$
Then
$d\beta=d\tilde{\alpha}+d\tilde{\lambda}$. Also using the facts $(d\tilde{\alpha})^i=0, \; \forall i\geq 2$, $(d\tilde{\lambda})^j=0, \; \forall j\geq 3$ we compute $$\beta \wedge (d\beta)^{3}=
3 \;\tilde{\alpha} \wedge d\tilde{\alpha} \wedge (d\tilde{\lambda})^2.$$ Therefore, $\beta \wedge (d\beta)^{3}$ is a volume form on $Y \times \R^4$ because $\alpha \wedge d\alpha$ and $(d\lambda)^2=dx_1\wedge dy_1\wedge dx_2\wedge dy_2 $ are volume forms on $Y$ and $\R^4$, respectively. Hence, $\beta$ is a contact form which defines a co-oriented contact structure $\eta$ on $Y \times \R^4$. \\

For the last statement, consider the embedding $f:Y \longrightarrow f(Y) \subset Y \times \R^4$ given by $y \mapsto (y,0)$ of $Y \times \{0\}$ in $Y \times \R^4$. Then we compute $$f^*(\beta)=f^*(\tilde{\alpha})+f^*(\tilde{\lambda})
=f^*(p_1^*(\alpha))+\underbrace{\tilde{\lambda}|\,_{Tf(Y)}}_{=0}=(\underbrace{p_1 \circ f}_{id})^*(\alpha)=\alpha$$
which shows that $\phi: (Y,\alpha) \to (f(Y),\beta|\,_{Tf(Y)})$ is a strict contactomorphism. In particular, $(Y \times \{0\},\phi_*(\xi))$ (or equivalently, $(Y,\xi)$) is a contact submanifold of $(Y \times \R^4,\eta)$.

\end{proof}

\begin{remark}
The above lemma can be proved in a much more general setting. More precisely, if we take $Y$ to be an arbitrary $(2n+1)$-dimensional manifold ($n\geq3$) admitting a co-oriented contact structure and $\R^4$ to be $\R^{2m}$ ($m\geq2$), then a similar statement is still true, i.e., there exist a co-oriented contact structure on $Y \times \R^{2m}$ such that $Y \times \{0\}$ is a contact submanifold. Since we are interested in dimension seven, the version stated above will be enough for our purposes.
\end{remark}

Next, we present a basic homotopy theoretical fact as a lemma below. Recall that a manifold $M$ admits a spin structure if and only if the second Stiefel-Whitney class $w_2(M)$ of $M$ vanishes.

\begin{lemma} \label{lem:Normal_bundle_trivial}
Let $Y$ be any orientable smooth $3$-manifold embedded in an orientable $7$-manifold $M$ with a spin structure. Then there exists a tubular neighborhood of $Y$ in $M$ which is trivial.
\end{lemma}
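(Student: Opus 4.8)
The plan is to reduce the statement to the triviality of the normal bundle $\nu \to Y$ of the embedding $Y \hookrightarrow M$: by the tubular neighborhood theorem some tubular neighborhood of $Y$ in $M$ is diffeomorphic to the total space of $\nu$, so as soon as $\nu \cong Y \times \R^4$ as a vector bundle we obtain a tubular neighborhood diffeomorphic to $Y \times \R^4$. To control the characteristic classes of $\nu$, I would use the splitting $TM|_Y \cong TY \oplus \nu$ and the Whitney sum formula, which give $w(TY)\cup w(\nu) = w(TM)|_Y$. Since $Y$ is an orientable $3$-manifold it is parallelizable (a classical fact), so $w(TY)=1$ and hence $w(\nu)=w(TM)|_Y$; and since $M$ is orientable and spin, $w_1(TM)=w_2(TM)=0$. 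Consequently $w_1(\nu)=w_2(\nu)=0$, so $\nu$ has structure group reducible to $SO(4)$ and admits a spin structure.

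The second step is a short obstruction/homotopy argument. Because $w_1(\nu)=w_2(\nu)=0$, the classifying map $f\colon Y\to BSO(4)$ of $\nu$ lifts to $\widetilde f\colon Y\to B\mathrm{Spin}(4)$, the map $B\mathrm{Spin}(4)\to BSO(4)$ being, up to homotopy, the fibre of a map $BSO(4)\to K(\Z/2,2)$ realizing $w_2$. Now $\mathrm{Spin}(4)\cong SU(2)\times SU(2)$, so $B\mathrm{Spin}(4)\simeq BSU(2)\times BSU(2)$; since $SU(2)\cong S^3$ is $2$-connected, $BSU(2)$, and therefore $B\mathrm{Spin}(4)$, is $3$-connected. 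As $Y$ is a smooth $3$-manifold it has the homotopy type of a CW complex of dimension at most $3$, and any map from such a complex into a $3$-connected space is null-homotopic, since the successive obstructions to a null-homotopy lie in $H^i(Y;\pi_i(B\mathrm{Spin}(4)))$ for $i\le 3$ and these groups all vanish. Hence $\widetilde f$, and a fortiori $f$, is null-homotopic, so $\nu$ is trivial. As an alternative to the last part, one may first note that $\dim Y=3<\mathrm{rank}\,\nu=4$, so obstruction theory yields a nowhere-zero section of $\nu$ and a splitting $\nu\cong\underline{\R}\oplus\eta$ with $\eta$ an oriented rank-$3$ bundle satisfying $w_2(\eta)=w_2(\nu)=0$; its classifying map then lifts to $B\mathrm{Spin}(3)=BSU(2)$, again $3$-connected, and the same reasoning shows $\eta$, hence $\nu$, is trivial.

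I expect no serious difficulty in this lemma: the only input that is not purely formal is the identity $w(\nu)=w(TM)|_Y$, which relies on the parallelizability of orientable $3$-manifolds (equivalently, on the classical fact that every orientable $3$-manifold is spin); once $w_1(\nu)=w_2(\nu)=0$ is established, the conclusion is forced by the $3$-connectivity of $B\mathrm{Spin}(4)$ (or $B\mathrm{Spin}(3)$) against the bound $\dim Y=3$. In particular no compactness assumption on $Y$ is required.
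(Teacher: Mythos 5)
Your proposal is correct and follows essentially the same route as the paper: parallelizability of the orientable $3$-manifold plus the Whitney sum formula gives $w_1(\nu)=w_2(\nu)=0$, the classifying map lifts to $B\mathrm{Spin}(4)\simeq BSU(2)\times BSU(2)$, and the vanishing of its homotopy groups through dimension $3$ forces the classifying map to be null-homotopic, hence $\nu$ trivial. The only difference is cosmetic: you invoke $3$-connectivity and obstruction theory directly, whereas the paper attaches cells to the codomain to make it contractible before drawing the same conclusion.
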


\begin{proof}
Since $3$-dimensional manifold $Y$ is orientable, it is parallelizable \cite{Steenrod, Stiefel}, in other words, the tangent bundle $TY$ is trivial. Therefore, $w_i(Y)=0, \,\forall i>0$. Let $N$ be the normal bundle of $Y$ in $M$. Then by the tubular neighborhood theorem, it suffices to show that $N$ is trivial. Note that $TM|_Y=TY \oplus N$ and so the total Stiefel-Whitney classes satisfies the equation $$w(TM|_Y)=w(TY)w(N).$$ which gives $w(N)=w(TM|_Y)$ as $w(TY)=1$. Thus, $w_1(N)=w_2(N)=0$ because $M$ is orientable and spin.

The vector bundle $N$ (of rank $4$) over $Y$ is classified by a Gauss map $$g: Y \to BO(4)$$ which lifts to a map $f: Y \to BSpin(4)$ as $w_1(N)=w_2(N)=0$. Note that the domain of $f$ is the $3$-dimensional manifold $Y$, and so we are allowed to change the codomain $BSpin(4)$ of $f$ by adding $k$-cells for $k\geq 5$, and hence we can assume that the homotopy groups $\pi_k$ of the codomain vanish for $k\geq4$. Call this new codomain $X$, and so we can rewrite $f$ as $$f:Y \to X \quad \textrm{with} \quad \pi_k(X)=0, \quad \forall k \geq 4.$$
On the other hand, since $Spin(4) = SU(2) \times SU(2) = S^3 \times S^3$, its lowest nontrivial homotopy group is $\pi_3=\Z \times \Z$. Therefore, the lowest nontrivial homotopy group of $BSpin(4)$ is  $\pi_4=\Z \times \Z$ which was already killed in the construction of $X$ above. We conclude that the codomain $X$ is contractible. As a result, $f:Y \to X$ (and so $g:Y \to BO(4)$) is homotopically trivial. Equivalently, $N$ is a trivial vector bundle.

\end{proof}

%
%

\section{Main results} \label{sec:Main results}

In this section we will first focus on spin $7$-manifolds and manifolds with $G_2$-structures, and then we will prove our two main results which can be stated as follows:

\begin{theorem} \label{thm:Existence_contact_on_G_2_mfld}
There exist infinitely many pairwise distinct non-closed $G_2$-manifolds such that they admit co-oriented contact structures and have co-oriented contact submanifolds which are also associative. Furthermore, the restrictions of the $G_2$ metrics on the submanifolds are associated metrics of the contact structures on the submanifolds.
\end{theorem}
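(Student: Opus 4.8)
The plan is to take suitable $G_2$-manifolds with associative submanifolds off the shelf and use the contact-geometric input of Section~\ref{sec:Preparatory results} to push a contact structure from such a submanifold out to a neighborhood of it. First I would invoke \cite{Robles_Salur}, together with the Bryant--Salamon construction of complete $G_2$-metrics, to produce a $G_2$-manifold $(\hat{M},\hat{\varphi})$ with induced metric $\hat{g}$ containing an orientable associative $3$-submanifold $Y$ that carries a co-oriented contact structure $\xi = \mathrm{Ker}(\alpha)$ for which $\hat{g}|_Y$ is an associated metric. The model is the zero section $S^3$ of the Bryant--Salamon $G_2$-metric on the spinor bundle $\mathbb{S}(S^3)\cong S^3\times\R^4$: here $\hat{g}$ restricts to a multiple of the round metric and $\xi$ is the standard Sasakian contact structure, for which the round metric is indeed associated. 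To obtain infinitely many pairwise non-diffeomorphic examples, I would quotient $\mathbb{S}(S^3)$ by $\Z_p\subset SU(2)$ acting by left translation on $S^3 = SU(2)$ for $p\geq 2$: the action is free, the $G_2$-structure descends, the associative zero section becomes a lens space $L(p,1)$ with its standard Sasakian contact structure, and since the covering $\mathbb{S}(S^3)\to\mathbb{S}(S^3)/\Z_p$ is a local isometry the holonomy stays full $G_2$.

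Given such a triple $(\hat{M},\hat{\varphi},Y)$, the heart of the argument is to extend $\xi$ to the ambient $G_2$-manifold near $Y$ using the preparatory lemmas. Since a $G_2$-structure forces $w_1 = w_2 = 0$, the manifold $\hat{M}$ is orientable and spin, so Lemma~\ref{lem:Normal_bundle_trivial} provides a trivial tubular neighborhood $N\cong Y\times\R^4$ of $Y$ in $\hat{M}$, and then Lemma~\ref{lem:Contact_Str_on_Y cross_R4} yields a co-oriented contact structure $\eta$ on $N$ for which $Y$ is a contact submanifold. Setting $M := N$ with the restricted $G_2$-structure $\varphi := \hat{\varphi}|_N$, the remaining assertions are routine: $(M,\varphi)$ is again a $G_2$-manifold because the torsion-free conditions $d\varphi = d(\ast\varphi) = 0$ are local and $\ast$ for $g|_N$ is the restriction of $\ast$ for $\hat{g}$; $M$ is non-closed, being diffeomorphic to $Y\times\R^4$; $Y$ is associative in $(M,\varphi)$ since $\varphi|_Y = \hat{\varphi}|_Y$ is still a volume form; $Y$ is a contact submanifold of $(M,\eta)$; and the restriction of the $G_2$-metric is $\hat{g}|_Y$, an associated metric for $\xi$. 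For the lens-space examples the total space of the (trivial) normal bundle is already a tubular neighborhood of the zero section, so $M = \hat{M}$ retains full holonomy $G_2$; and the $M$'s are pairwise non-diffeomorphic because $M$ deformation retracts onto $Y$, so $\pi_1(M)\cong\pi_1(L(p,1)) = \Z_p$ separates them.

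I expect the genuine obstacle to be the first step rather than the extension: knowing merely that $G_2$-manifolds with associative submanifolds exist is not enough, since one also needs the associative submanifold to support a co-oriented contact structure whose associated metric coincides with the metric induced from the ambient $G_2$-metric, and one needs infinitely many pairwise non-diffeomorphic such $G_2$-manifolds (with at least one of full holonomy). This is precisely what \cite{Robles_Salur} and the Bryant--Salamon spinor-bundle metric, with its free finite quotients, deliver; once that supply is in hand, Lemmas~\ref{lem:Contact_Str_on_Y cross_R4} and~\ref{lem:Normal_bundle_trivial} do the contact-geometric work and everything else is functoriality of restriction of forms and metrics.
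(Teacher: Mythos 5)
Your extension step is sound and matches the paper's machinery: a $G_2$-structure gives $w_1=w_2=0$, Lemma \ref{lem:Normal_bundle_trivial} trivializes a tubular neighborhood, Lemma \ref{lem:Contact_Str_on_Y cross_R4} puts a co-oriented contact structure on $Y\times\R^4$ with $Y$ a contact submanifold, and restricting a torsion-free $\varphi$ to an open set is harmless. The genuine gap is in the supply step, and it lands exactly on the ``Furthermore'' clause, which is the delicate part of the statement. On the zero section of the Bryant--Salamon spinor bundle the induced metric is a \emph{specific constant multiple} of the round metric, and a constant multiple of the round metric is in general not an associated metric of any contact structure: with the paper's compatibility conventions ($\alpha(R)=1$ and $g(JX,JY)=g(X,Y)-\alpha(X)\alpha(Y)$ force $g(R,R)=1$, and $d\alpha(X,Y)=g(JX,Y)$ fixes the scale of $d\alpha$ against $g$), the classification of constant-curvature contact metric $3$-manifolds (cf.\ \cite{Blair}) shows that besides the flat case only one normalized curvature value occurs, so at most one homothety class representative of the round metric can be an associated metric of \emph{any} contact form on $S^3$. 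Hence ``restricts to a multiple of the round metric \dots for which the round metric is indeed associated'' does not deliver the conclusion unless you also rescale the ambient $G_2$-structure ($\varphi\mapsto\lambda^3\varphi$, $g\mapsto\lambda^2 g$, still torsion-free) so that the induced metric hits exactly the Sasakian normalization --- a step you never perform. The same normalization problem recurs for the quotients, and in addition you assert, without argument, that the left $\Z_p$-action lifts to the spinor bundle preserving the Bryant--Salamon torsion-free $G_2$-structure and that the quotient's zero section inherits the contact metric structure; this is plausible but is precisely the kind of claim that needs proof here.

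Note also that you are under-using \cite{Robles_Salur}: its strength, and the paper's key move, is that the induced metric on the associative submanifold can be \emph{prescribed}. The paper runs the logic in the opposite order: take any closed, oriented, real analytic $3$-manifold $Y$, choose a co-oriented contact structure $\xi$ on it \cite{Lutz, Martinet, Thurston-Winkelnkemper}, choose an associated metric $g$ for $\xi$, and then apply Theorem \ref{thm:Robles_Salur} to $(Y,g)$ to get a $G_2$-manifold which is topologically a thickening $Y\times\R^4$ and whose $G_2$-metric restricts to $g$ by construction; Lemma \ref{lem:Contact_Str_on_Y cross_R4} then supplies the ambient contact structure, and ``infinitely many pairwise distinct'' comes from varying $Y$ over distinct $3$-manifolds, with no curvature normalization or equivariant quotient construction needed. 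If you want to keep your Bryant--Salamon examples you must add the rescaling argument and verify the $\Z_p$-equivariance of the $G_2$-structure; otherwise the argument should be reorganized along these lines.
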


\begin{theorem} \label{thm:Existence_contact_on_G_2_mfld_Full_G_2}
There are non-closed $G_2$-manifolds with holonomy exactly $G_2$ such that they admit co-oriented contact structures and have co-oriented contact submanifold which are also associative. Furthermore, the restrictions of the $G_2$ metrics on the submanifolds are associated metrics of the contact structures on the submanifolds.
\end{theorem}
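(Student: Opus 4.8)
The plan is to place such a $G_2$-manifold inside the Bryant--Salamon construction and then install the contact structure via Lemma~\ref{lem:Contact_Str_on_Y cross_R4}. Recall that Bryant and Salamon produced a complete torsion-free $G_2$-structure $\varphi$ with holonomy the \emph{full} group $G_2$ on the (real) spinor bundle of $S^3$; since $S^3$ is orientable, hence parallelizable, this bundle is trivial, so its total space is $M \cong S^3 \times \R^4$. In this model the zero section $Y = S^3 \times \{0\}$ is an associative submanifold (it is calibrated by $\varphi$; see \cite{Harvey-Lawson}, \cite{Joyce}, and also \cite{Robles_Salur}), and the induced metric $g_\varphi|_Y$ is a positive constant multiple of the round metric on $S^3$. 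Replacing $\varphi$ by $t^3\varphi$ for suitable $t>0$ scales $g_\varphi$ by $t^2$ and changes neither the torsion-free condition nor the holonomy, so after such a rescaling we may assume that $g_\varphi|_Y$ is exactly the round metric $g_{S^3}$ of radius $1$.

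The next step is to recall that $(S^3,g_{S^3})$ is the standard Sasakian sphere. Let $R_0$ be the unit Hopf vector field on $S^3$ and $\alpha_0 := g_{S^3}(R_0,\,\cdot\,)$ its metric dual; then $\alpha_0$ is a contact form, and together with $R_0$ and the induced almost complex structure $J_0$ on $\xi_0 := \ker\alpha_0$ it satisfies $g_{S^3}(J_0X,J_0Y) = g_{S^3}(X,Y) - \alpha_0(X)\alpha_0(Y)$ and $d\alpha_0(X,Y) = g_{S^3}(J_0X,Y)$ (see \cite{Blair}). In other words, $g_\varphi|_Y = g_{S^3}$ is an associated metric of the co-oriented contact structure $\xi_0$ on $Y$, in the sense used in this paper.

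Now apply Lemma~\ref{lem:Contact_Str_on_Y cross_R4} to $(Y,\xi_0) = (S^3,\xi_0)$: it yields a co-oriented contact structure $\eta$ on $M = S^3 \times \R^4$, given by the globally defined form $\beta = p_1^{\ast}\alpha_0 + p_2^{\ast}\lambda$, such that $Y \times \{0\}$ is a contact submanifold of $(M,\eta)$ and $y \mapsto (y,0)$ is a strict contactomorphism onto it; in particular the contact form induced on $Y \times \{0\}$ from $\eta$ is precisely $\alpha_0$. Hence $(M,\varphi)$ together with $\eta$ is a non-closed $G_2$-manifold with holonomy exactly $G_2$, carrying a co-oriented contact structure, for which $Y\times\{0\}$ is simultaneously associative and a co-oriented contact submanifold, and for which $g_\varphi|_{Y\times\{0\}}$ is an associated metric of the induced contact structure. (As anticipated in the abstract, $\varphi$ and $\eta$ are not compatible here; this is a feature of the example, not a gap in the argument.) To obtain infinitely many pairwise distinct such $G_2$-manifolds I would run the same argument over spherical space forms: for a finite cyclic $\Gamma \subset SU(2)$ acting freely on $S^3$, the Bryant--Salamon metric descends to the spinor bundle of the lens space $L = S^3/\Gamma$, still with full holonomy $G_2$ and still with zero section $L$ associative; the form $\alpha_0$ and the metric $g_{S^3}$ are $\Gamma$-invariant and hence descend to $L$; and the underlying manifolds $L \times \R^4$ are pairwise non-homotopy-equivalent as $\Gamma$ varies (their fundamental groups are distinct), hence pairwise non-diffeomorphic. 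Applying Lemma~\ref{lem:Contact_Str_on_Y cross_R4} in each case completes the construction.

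The step I expect to demand the most care is the metric-matching in the second paragraph: identifying precisely which multiple of $g_{S^3}$ appears on the zero section of the Bryant--Salamon space, and verifying, after the rescaling of $\varphi$, that the induced $G_2$-metric is \emph{honestly} an associated contact metric there (the normalization of $\alpha_0$ relative to $g_{S^3}$ — i.e.\ the exact constant in $d\alpha_0 = g_{S^3}(J_0\,\cdot\,,\cdot)$ and in $g_{S^3}(R_0,R_0)=1$ — must be tracked, as must the invariance of the whole package under $\Gamma$ in the last part). Everything else is either quoted from the Bryant--Salamon construction (the associativity of the zero section and the fullness of the holonomy) or is a direct application of Lemma~\ref{lem:Contact_Str_on_Y cross_R4}.
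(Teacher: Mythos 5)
Your proposal is essentially correct, but it proves the theorem by a genuinely different route than the paper. The paper never touches Bryant--Salamon: it reuses the proof of Theorem~\ref{thm:Existence_contact_on_G_2_mfld}, i.e.\ it starts on the contact side, choosing a closed orientable $3$-manifold $Y$ which by Rukimbira's result \cite{Rukimbira} admits no flat contact metric, taking any co-oriented contact structure on $Y$ together with an associated metric $g$ (necessarily non-flat), and then feeding $(Y,g)$ into the Robles--Salur embedding theorem \cite{Robles_Salur}; since that theorem produces a $G_2$-manifold $M\approx Y\times\R^4$ whose metric restricts to the prescribed $g$ on the associative copy of $Y$, the ``associated metric'' clause is automatic, and full holonomy follows from non-flatness of $g$. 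Lemma~\ref{lem:Contact_Str_on_Y cross_R4} then supplies the ambient contact structure exactly as in your last step. Your argument goes the other way: you start from an explicit full-holonomy $G_2$-manifold (the Bryant--Salamon spinor bundle of $S^3$, trivialized as $S^3\times\R^4$) and must verify by hand that the induced metric on the associative zero section is an associated contact metric. What your route buys is explicitness, completeness of the metric, and independence from the real-analyticity hypothesis hidden in the Robles--Salur input; what the paper's route buys is flexibility (any $Y$ with no flat contact metric works, which is also what powers the ``infinitely many'' statement of Theorem~\ref{thm:Existence_contact_on_G_2_mfld}) and the automatic metric matching. Your lens-space coda is a bonus not required by the statement, and it does need the standard but unproved facts that the $\Gamma$-action lifts to the spinor bundle preserving the Bryant--Salamon $G_2$-structure and that the quotient's full (not just restricted) holonomy is $G_2$; both are fine since $\varphi$ descends and is parallel.

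One concrete correction in your metric-matching step, which you yourself flagged as the delicate point: with this paper's normalization $d\alpha(X,Y)=g(JX,Y)$, the \emph{unit} round metric on $S^3$ is not an associated metric of the Hopf contact form $\alpha_0=g_{S^3}(R_0,\cdot\,)$; a direct computation (e.g.\ with a left-invariant orthonormal coframe, $d\sigma^1=-2\,\sigma^2\wedge\sigma^3$) gives $d\alpha_0(X,Y)=2\,g_{S^3}(J_0X,Y)$ on $\ker\alpha_0$, so radius $1$ is off by a factor of $2$. The fix lives entirely inside your own rescaling step: choose $t$ so that $g_\varphi|_Y$ is the round metric of the radius for which that factor becomes $1$ (radius $2$ in this convention), with $\alpha_0$ the metric dual of the unit Hopf field for that rescaled metric; since the Bryant--Salamon induced metric on the zero section is some positive constant multiple of the round metric and $t$ is free, this is always achievable, and the rest of your argument goes through unchanged.
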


Using the results of the previous section we immediate conclude that

\begin{theorem} \label{thm:Contact_Str_on_Tub_Ngbd_Spin_Case}
Let $Y$ be any orientable smooth $3$-manifold embedded in an orientable $7$-manifold $M$ with a spin structure. Then for any co-oriented contact structure $\xi$ on $Y$, there is a co-oriented contact structure $\eta$ on some tubular neighborhood $N$ of $Y$ in $M$ such that $(Y,\xi)$ is a contact submanifold of $(N,\eta)$.
\end{theorem}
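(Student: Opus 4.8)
The plan is to combine directly the two preparatory lemmas of Section~\ref{sec:Preparatory results}, so the argument is essentially a gluing of them. First I would invoke Lemma~\ref{lem:Normal_bundle_trivial}: since $Y$ is an orientable $3$-manifold embedded in the orientable spin $7$-manifold $M$, its normal bundle is trivial, so by the tubular neighborhood theorem there is an open neighborhood $N$ of $Y$ in $M$ together with a diffeomorphism $\Psi : Y \times \R^4 \longrightarrow N$. Moreover the tubular neighborhood theorem allows me to choose $\Psi$ so that it restricts to the identity on the zero section, i.e.\ so that $\Psi(y,0)=y$ under the inclusion $Y \hookrightarrow M$; this normalization will be used at the end.

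Next I would apply Lemma~\ref{lem:Contact_Str_on_Y cross_R4} to the co-oriented contact manifold $(Y,\xi)$. Writing $\xi=\mathrm{Ker}(\alpha)$, the lemma produces the contact form $\beta=p_1^*(\alpha)+p_2^*(\lambda)$ on $Y\times\R^4$, whose kernel is a co-oriented contact structure $\eta'$, and for which the inclusion $\iota:Y\hookrightarrow Y\times\R^4$, $y\mapsto(y,0)$, is a strict contactomorphism onto its image; in particular $(Y\times\{0\},\eta'|)$ is a contact submanifold of $(Y\times\R^4,\eta')$.

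Finally I would transport this structure to $N$. Set $\eta:=\Psi_*(\eta')$; since the push-forward of a (co-oriented) contact structure by a diffeomorphism is again one, $\eta$ is a co-oriented contact structure on the tubular neighborhood $N\subset M$. Because $\Psi\circ\iota$ equals, up to the inclusion $N\hookrightarrow M$, the original embedding of $Y$, it follows that $\Psi\circ\iota$ is a strict contactomorphism from $(Y,\alpha)$ onto its image in $(N,\eta)$; hence $(Y,\xi)$ is a contact submanifold of $(N,\eta)$, which is precisely the assertion.

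The only point deserving care — and the one a referee would scrutinize — is the compatibility of the two identifications: one must be sure that the diffeomorphism supplied by the tubular neighborhood theorem can be arranged to be the identity on $Y$, so that the contact submanifold manufactured inside $Y\times\R^4$ by Lemma~\ref{lem:Contact_Str_on_Y cross_R4} corresponds to the given $(Y,\xi)\subset M$ rather than to some reparametrized copy of it. This is a standard feature of that theorem, and it is the step that actually welds the two lemmas together; everything else in the proof is formal.
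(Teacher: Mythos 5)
Your proof is correct and follows essentially the same route as the paper: invoke Lemma~\ref{lem:Normal_bundle_trivial} to trivialize a tubular neighborhood $N\approx Y\times\R^4$, apply Lemma~\ref{lem:Contact_Str_on_Y cross_R4} there, and transport the resulting contact structure to $N$. The compatibility point you flag (arranging the tubular-neighborhood diffeomorphism to be the identity on the zero section) is exactly what the paper handles implicitly with its identification of $Y\subset N$ with $Y\times\{0\}$, so your write-up is just a slightly more careful version of the same argument.
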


\begin{proof}
We know, by Lemma \ref{lem:Normal_bundle_trivial}, that a small enough tubular neighborhood, say $N$, of $Y$ in $M$ is trivial. Therefore, we may write $N\approx Y \times \R^4$. (note that $Y \subset N$ is identified with $Y \times \{0\} \subset Y\times \R^4$). On the other hand, Lemma \ref{lem:Contact_Str_on_Y cross_R4} implies that for any co-oriented contact structure $\xi$ on $Y$, there exists a co-oriented contact structure $\eta$ on $Y\times \R^4$ (and so on $N$) such that $(Y,\xi)$ is a contact submanifold of $(N\approx Y\times \R^4,\eta)$.

\end{proof}

Since every manifold with $G_2$-structure admits a spin structure, we have the following corollary:

\begin{corollary} \label{cor:Contact_Str_on_Tub_Ngbd_G2_Case}
Let $Y$ be any orientable smooth $3$-manifold embedded in a manifold $M$ with $G_2$-structure. Then for any co-oriented contact structure $\xi$ on $Y$, there exists a co-oriented contact structure $\eta$ on some tubular neighborhood $N$ of $Y$ in $M$ such that $(Y,\xi)$ is a contact submanifold of $(N,\eta)$. \qed
\end{corollary}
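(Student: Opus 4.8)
The plan is to deduce this immediately from Theorem~\ref{thm:Contact_Str_on_Tub_Ngbd_Spin_Case}, so the only thing that really needs to be checked is that the hypothesis ``$M$ carries a $G_2$-structure'' supplies the two ingredients that theorem requires of $M$, namely orientability and the existence of a spin structure.

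First I would recall why a $G_2$-structure forces $w_1(M)=w_2(M)=0$. By definition a $G_2$-structure is a reduction of the structure group of $TM$ to $G_2 \subset GL(7,\R)$, and since $G_2$ is compact and connected we may further reduce to $G_2 \subset SO(7)$; thus $w_1(M)=0$ and $M$ is orientable. Because $G_2$ is simply connected, the inclusion $G_2 \hookrightarrow SO(7)$ lifts uniquely through the double cover $\mathrm{Spin}(7) \to SO(7)$ to an embedding $G_2 \hookrightarrow \mathrm{Spin}(7)$; composing the classifying map $M \to BG_2$ with $BG_2 \to B\mathrm{Spin}(7)$ then produces a spin structure on $M$, i.e. $w_2(M)=0$. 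This is precisely the fact from \cite{Arikan-Cho-Salur} quoted in the sentence preceding the corollary.

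With $M$ now an orientable $7$-manifold with a spin structure and $Y \subset M$ an orientable $3$-manifold, I would simply apply Theorem~\ref{thm:Contact_Str_on_Tub_Ngbd_Spin_Case} to the given co-oriented contact structure $\xi$ on $Y$: it yields a tubular neighborhood $N$ of $Y$ in $M$ together with a co-oriented contact structure $\eta$ on $N$ such that $(Y,\xi)$ is a contact submanifold of $(N,\eta)$, which is exactly the claim. There is no real obstacle here: all of the content sits in Theorem~\ref{thm:Contact_Str_on_Tub_Ngbd_Spin_Case} (hence in Lemmas~\ref{lem:Contact_Str_on_Y cross_R4} and~\ref{lem:Normal_bundle_trivial}), and the only subtlety worth a remark is the lift $G_2 \hookrightarrow \mathrm{Spin}(7)$, which one could alternatively phrase homotopically via $\pi_1(G_2)=\pi_2(G_2)=0$.
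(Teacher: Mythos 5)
Your proposal is correct and matches the paper's own argument: the corollary is deduced directly from Theorem~\ref{thm:Contact_Str_on_Tub_Ngbd_Spin_Case} using the fact that a $G_2$-structure makes $M$ orientable and spin. Your extra detail on the lift $G_2 \hookrightarrow \mathrm{Spin}(7)$ simply spells out the fact the paper cites from \cite{Arikan-Cho-Salur}, so there is no substantive difference.
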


Now observe that with a little more care in the proof of Lemma \ref{lem:Normal_bundle_trivial}, we can see that a trivial neighborhood of an orientable $3$-manifold in an orientable spin $7$-manifold is itself a manifold with a spin structure. Indeed, we can also proceed as follows: Let $Y$ be any closed orientable smooth $3$-manifold and consider the product $M=Y \times \R^4$. Then since $TM=TY \times T\R^4$ and from the fact that $w_2(Y)=0$ ($Y$ is parallelizable), we have $w_2(M)=0$. Therefore, $M$ admits a spin structure. On the other hand, $Y$ admits a co-oriented contact structure $\xi$ \cite{Lutz}, \cite{Martinet}, \cite{Thurston-Winkelnkemper}. Also we know by Lemma \ref{lem:Contact_Str_on_Y cross_R4} that for any co-oriented contact structure $\xi$ on $Y$ we can construct a co-oriented contact structure on $Y \times \R^4$ such that $(Y,\xi)$ is a contact submanifold. Also note that distinct (non-homeomorphic) $Y$'s gives distinct products. We can summarize this paragraph as the following theorem.

\begin{theorem} \label{thm:Contact_Str_on_Tub_Ngbd_Spin_Case}
There exist infinitely many pairwise distinct non-closed $7$-manifolds with spin structure such that they admit co-oriented contact structures and retract onto co-oriented contact submanifolds of co-dimension four. \qed
\end{theorem}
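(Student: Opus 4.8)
The plan is to realize each of the desired seven-manifolds as a product $M = Y \times \R^4$, where $Y$ ranges over a suitably chosen infinite family of pairwise non-homeomorphic closed orientable $3$-manifolds. First I would fix such a family; the cleanest choice is the lens spaces $Y_p = L(p,1)$ with $p$ ranging over the primes (or over all integers $\geq 2$), since these are closed, orientable, $3$-dimensional, and have pairwise non-isomorphic fundamental groups $\ZZ/p$. By the Steenrod--Stiefel theorem each $Y_p$ is parallelizable, so in particular $w_2(Y_p) = 0$.

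Next I would verify that $M_p := Y_p \times \R^4$ has the required features. Since $TM_p = p_1^\ast TY_p \oplus p_2^\ast T\R^4$ and $T\R^4$ is trivial, we get $w(TM_p) = p_1^\ast w(TY_p) = 1$; in particular $w_2(M_p) = 0$, so $M_p$ admits a spin structure (exactly as in the paragraph preceding the statement, and consistent with Lemma~\ref{lem:Normal_bundle_trivial}). The manifold $M_p$ is non-compact, hence non-closed, and the projection $p_1$ together with the inclusion $y \mapsto (y,0)$ exhibit a deformation retraction of $M_p$ onto $Y_p \times \{0\}$, which sits in $M_p$ with codimension four.

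Then I would install the contact structures. By Lutz--Martinet (equivalently, via Thurston--Winkelnkemper open books) every closed orientable $3$-manifold carries a co-oriented contact structure; fix one, $\xi_p$, on $Y_p$. Applying Lemma~\ref{lem:Contact_Str_on_Y cross_R4} with $Y = Y_p$ produces a co-oriented contact structure $\eta_p$ on $M_p = Y_p \times \R^4$ for which $(Y_p \times \{0\}, \phi_\ast\xi_p)$ is a contact submanifold, of codimension four by the previous step. Thus each $M_p$ admits a co-oriented contact structure and retracts onto a co-oriented contact submanifold of codimension four.

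Finally I would check that the $M_p$ are pairwise distinct, which is the only point needing an argument rather than a citation, and hence the main (mild) obstacle. Because $\R^4$ is contractible, $M_p \simeq Y_p$, so $\pi_1(M_p) \cong \pi_1(Y_p) \cong \ZZ/p$; these groups are pairwise non-isomorphic for distinct primes $p$, so the $M_p$ are pairwise non-homotopy-equivalent, a fortiori pairwise non-homeomorphic. This produces infinitely many pairwise distinct examples and finishes the proof. Any infinite family of closed orientable $3$-manifolds with pairwise non-isomorphic fundamental groups (lens spaces, or $\#^k(S^1 \times S^2)$ for varying $k$, etc.) would serve equally well; the only care required is to commit to such a family so that the homotopy types genuinely vary.
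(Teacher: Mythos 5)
Your proof is correct and follows essentially the same route as the paper: form $M = Y \times \R^4$ for closed orientable $Y$, get $w_2(M)=0$ from parallelizability of $Y$, put a contact structure on $Y$ via Lutz--Martinet/Thurston--Winkelnkemper, and extend it to the product by Lemma \ref{lem:Contact_Str_on_Y cross_R4}, with the product retracting onto the codimension-four core. The only point where you are more careful than the paper is pairwise distinctness: the paper just asserts that non-homeomorphic $Y$'s give distinct products, while you fix a family with pairwise non-isomorphic fundamental groups and use $\pi_1(Y\times\R^4)\cong\pi_1(Y)$, which is the cleaner justification since homotopy-equivalent but non-homeomorphic $3$-manifolds (certain lens spaces) can in fact have diffeomorphic thickenings, so distinguishing by homotopy type rather than by homeomorphism type of $Y$ is exactly the right move.
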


\begin{proof}[Proof of Theorem \ref{thm:Existence_contact_on_G_2_mfld}]
We start our proof by recalling a useful theorem proved in \cite{Robles_Salur} which is, in fact, the key result for the proof.

\begin{theorem} [\cite{Robles_Salur}] \label{thm:Robles_Salur}
Assume $(Y^3,g)$ is a closed, oriented, real analytic Riemannian $3$-manifold. Then there exists a $G_2$-manifold $(M^7,\varphi)$ and an isometric embedding $i: Y \rightarrow M$ such that the image $i(M)$ is an associative submanifold of $M$. Moreover, $(M,\varphi)$ can be chosen so that $i(Y)$ is the fixed
point set of a nontrivial $G_2$-involution $r : M \rightarrow M$. Moreover, as long as the metric $g$ on $Y$ is not flat, the holonomy of $M$ is exactly $G_2$.
\end{theorem}

Let $Y$ be any closed, oriented, real analytic $3$-manifold. Then as before we know  \cite{Lutz}, \cite{Martinet},\cite{Thurston-Winkelnkemper} that there exists a co-oriented contact structure $\xi$ on $Y$. Let $g$ be a metric on $Y$ associated to $\xi$. Then applying Theorem \ref{thm:Robles_Salur} to the Riemannian manifold $(Y,g)$, we know that there exists a $G_2$-manifold $(M,\varphi)$ such that $Y$ is an associative submanifold and the $G_2$-metric $g_\varphi$ restricts to $g$ on $Y$. On the other hand, the construction used in the proof of Theorem \ref{thm:Robles_Salur} also implies that $M$ is, indeed, topologically a thickening of $Y$ (in other words, topologically $M\approx Y \times \R^4$). Therefore, by Lemma \ref{lem:Contact_Str_on_Y cross_R4}, there exists a co-oriented contact structure $\eta$ on $M\approx Y \times \R^4$ such that $(Y,\xi)$ is a contact submanifold of $(M,\eta)$.

\vspace{.1in}

Moreover, clearly we have infinitely many distinct choices for the $3$-manifold $Y$, and so we can construct infinitely many pairwise distinct $G_2$-manifolds with the properties described above. This finishes the proof.

\end{proof}

\begin{proof}[Proof of Theorem \ref{thm:Existence_contact_on_G_2_mfld_Full_G_2}]
We basically follow the same steps used in the previous proof, with some exceptions. More precisely, since we want to construct $G_2$-manifolds with holonomy exactly $G_2$, the choice of $Y$ is more restrictive. By Theorem \ref{thm:Robles_Salur}, if the metric $g$ on $Y$ is not flat, then $M$ has holonomy $G_2$. So we need to find a closed, oriented, real analytic $3$-manifold admitting a contact structure whose associated metric is not flat. To this end, we recall the fact that there exist closed orientable $3$-manifolds which admit no flat contact metrics \cite{Rukimbira}. Therefore, if $Y$ is chosen to be one of these $3$-manifolds, then for any contact structure on $Y$, its associated metric (i.e., contact metric) is not flat. Hence, the result follows.
\end{proof}


\clearpage

\end{document}